\let\oldmarginpar\marginpar
\renewcommand\marginpar[1]{\oldmarginpar[\raggedleft\footnotesize #1]%
{\raggedright\footnotesize #1}}
\renewcommand{\setminus}{{\smallsetminus}}
\newcommand{\NN}{{\mathbb{N}}}
\newcommand{\cut}{{\backslash \backslash}}
\newcommand{\abs}[1]{{\left\vert #1 \right\vert}}
\newcommand{\GA}{{\mathbb{G}_A}}
\newcommand{\GB}{{\mathbb{G}_B}}
\newcommand{\G}{{\mathbb{G}}}
\newcommand{\HH}{{\mathbb H}}
\theoremstyle{definition}
\theoremstyle{plain}
\newtheorem{theorem}{Theorem}[section]
\newtheorem{corollary}[theorem]{Corollary}
\newtheorem{lemma}[theorem]{Lemma}
\newtheorem*{namedtheorem}{\theoremname}
\newcommand{\theoremname}{testing}
\theoremstyle{definition}
\newtheorem{define}[theorem]{Definition}
\newtheorem{remark}[theorem]{Remark}
\begin{document}
\title[On  the degree of the colored Jones polynomial]{On the degree of the  colored Jones polynomial}
\author[E. Kalfagianni]{Efstratia Kalfagianni}
\author[C. Lee]{Christine Ruey Shan Lee}

\address[]{Department of Mathematics, Michigan State University, East
Lansing, MI, 48824}

\email[]{kalfagia@math.msu.edu}

\email[]{leechr29@msu.edu}
\thanks{Lee was supported by NSF/RTG grant DMS-0739208  and NSF grant DMS--1105843.}
\thanks{{Kalfagianni was supported in part by NSF grants DMS--1105843 and DMS-1404754.}}

\thanks{ \today}

\begin{abstract} The extreme degrees of the colored Jones polynomial of \emph{any} link are bounded in terms of concrete data  from any link diagram. It is known that these bounds are sharp
for semi-adequate diagrams. One of the goals of this paper is to show the converse; if the bounds are sharp then the diagram is semi-adequate.
As a result, we use colored Jones link polynomials to extract an invariant that detects semi-adequate links and discuss some applications.\end{abstract}

\maketitle

\section{Introduction} The Jones polynomial and the colored Jones polynomials of \emph{semi-adequate} links
have been studied considerably in the literature \cite{lick-thistle,lickorish:book, stoimenow:coeffs} and  \cite{codyoliver1, armond, codyoliver,  dasbach-lin:head-tail,  garouvong, garouvong1,   garoufalidisLe} and they have been shown  to capture deep information about incompressible surfaces and geometric structures of link complements
 \cite{fkp:filling, fkp:PAMS, fkp:gutsjp, fkp:qsf, fkp:survey}.

The extreme degrees of the colored Jones polynomial of \emph{any} link are bounded in terms of concrete data  from any link diagram. It is known that these bounds are sharp
for semi-adequate diagrams. One of the goals of this paper is to show the converse; if the bounds are sharp then the diagram is semi-adequate.
As an  application we extract a link invariant, out of the colored Jones polynomial of a link, that detects precisely when the link is
semi-adequate. We discuss how this invariant can be thought of as generalizing certain  stable coefficients  of the colored Jones polynomials  of semi-adequate links,
studied by Armond \cite{armond},
Dasbach and Lin \cite{dasbach-lin:head-tail},  and Garoufalidis and Le \cite{garoufalidisLe}, to all links.
We also discuss how, combined with work of  Futer, Kalfagianni and Purcell \cite{ fkp:gutsjp, fkp:qsf}, our invariant detects certain incompressible
surfaces in link complements and their geometric types.

To describe the results and the contents of the paper in more detail, recall that a link is called  semi-adequate if it admits a link diagram that is $A$-\emph{adequate} or $B$-\emph{adequate}; see Definition \ref{semiad} for more details.
The colored Jones polynomial of a link $K$ 
is  a sequance of Laurent polynomial invariants $\{ J_K(n+1, q)\}_n$ such that $J_K(2, q)$ is the ordinary Jones polynomial. Let $d(n)$ and $d^{*}(n)$ denote the minimum and maximum degree of 
$J_K(n+1, q)$ in $q$, respectively.  It is known that for any  link diagram $D$ of $K$, there exists explicit 
functions $h_n(D)$ and $h^{*}_n(D)$ such that
$d(n)\geq h_n(D)$ and $d^{*}(n)\leq h^{*}_n(D)$; see \S3.1 for more details.
If $D$ is an $A$-adequate diagram then the lower bounds are sharp for all $n>1$ and similarly the upper bounds are sharp
if $D$ is $B$-adequate. 
It is known \cite{manchon}  that there exist  infinitely many links $K$  that admit diagrams $D$ with $d(1)=h_1(D)$ but $D$ or $K$ are not $A$-adequate. Thus the degree of the Jones polynomial alone doesn't detect semi-adequate links.
Our main result in this paper is the following theorem stating  that the degee of the colored Jones polynomial detects
semi-adequate links.

\begin{theorem} \label{zero1}Let $D$ be a diagram of a link $K$ and let  $h_n(D)$, $d(n)$ $h^{*}_n(D)$,  $d(^{*}n)$ and $J_K(n+1, q)$ be as above. Then,
  $D$ is $A$-adequate if and only if we have $d(n)=h_n(D)$, for some $n\geq 2$.

Similarly,   $D$ is $B$-adequate if and only if we have $d^{*}(n)=h^{*}_n(D)$, for some $n\geq 2$.
 \end{theorem}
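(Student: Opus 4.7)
The plan is to prove each direction via its contrapositive: if $D$ is not $A$-adequate, then $d(n) > h_n(D)$ strictly for every $n \geq 2$, and dually for $B$-adequacy. The forward implications (adequate implies sharp) are classical, so the content to be supplied is that non-adequacy obstructs sharpness for \emph{all} colors $n\geq 2$ (the case $n=1$ being known to fail).

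First I would recall the derivation of $h_n(D)$. Writing $J_K(n+1,q)$ as a renormalized Kauffman bracket of the $n$-cabled diagram $D^{(n)}$ with a Jones--Wenzl idempotent $f^{(n)}$ inserted on each link component, and expanding each $f^{(n)}$ via the Wenzl recursion as a $\ZZ[A^{\pm 1}]$-linear combination of Temperley--Lieb tangles, one obtains a state sum indexed by a Kauffman state $\sigma$ on the $n^2 c(D)$ crossings of $D^{(n)}$ together with a choice of Temperley--Lieb tangle at each idempotent box. The bound $h_n(D)$ is realized by the \emph{seed} contribution $T_0$: the all-$A$ resolution at every cabled crossing together with the identity element at every box. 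When $D$ is $A$-adequate, a standard state-graph estimate shows $T_0$ is the unique summand of $q$-degree $h_n(D)$ (any $B$-switch strictly drops the state-circle count by the adequacy hypothesis, and any non-identity Wenzl term carries a coefficient of strictly positive $q$-degree), yielding $d(n)=h_n(D)$.

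The main new step is the non-adequate case. Assume $D$ has a crossing $c$ whose two $A$-arcs lie on a single state circle of $\G_A$. In $D^{(n)}$ the all-$A$ resolution at the $n^2$ cabled crossings replacing $c$ produces a local configuration in which $n$ parallel strands leaving one Jones--Wenzl box return to it as a \emph{turnback}. Since $f^{(n)}$ annihilates turnbacks for $n\geq 2$, I would exhibit a cancellation packet of states, all of $q$-degree equal to $h_n(D)$, which together realize the action of $f^{(n)}$ on a turnback and therefore sum to zero. Concretely, one groups the seed with the states obtained from it by switching the $A$-resolutions near $c$ to $B$-resolutions while inserting the corresponding Temperley--Lieb generators into one of the boxes, and observes that this packet reassembles exactly into the skein expansion of $f^{(n)}$ composed with a turnback. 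This forces the coefficient at degree $h_n(D)$ to vanish, giving $d(n)>h_n(D)$. The $B$-adequate statement then follows by applying the same analysis to the mirror $\overline{D}$.

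The hard part will be making the cancellation packet globally well defined and uniform in $n$, especially when several crossings of $D$ fail $A$-adequacy simultaneously, or when the offending state circle through $c$ revisits $c$ from both sides. My plan for these issues is to localize the packet inside a small disk neighborhood of $c$ and use a skein-theoretic factorization: treat the complement of the neighborhood as an opaque tangle, and reduce the vanishing to a purely local identity in the Temperley--Lieb algebra, namely the turnback-killing property of $f^{(n)}$. A secondary subtlety is to verify that Temperley--Lieb terms coming from deeper in the Wenzl recursion cannot produce spurious monomials of degree $h_n(D)$ outside the packet; this I expect to control by the standard $q$-degree estimates on the coefficients in the Wenzl expansion, possibly combined with an inductive reduction to the $n=2$ case where the Wenzl recursion is simplest.
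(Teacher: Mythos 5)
Your overall strategy (treat the adequate direction as classical and show that non-adequacy forces the extreme coefficient to vanish for every $n\geq 2$) matches the paper's, but the mechanism you propose for the vanishing is genuinely different, and as written it has a gap at its central step. The configuration you want to feed to the turnback-killing property of $f^{(n)}$ does not actually occur: the all-$A$ state of $D^n$ is exactly the $n$-parallel of the all-$A$ state of $D$ (this is why $v_A(D^n)=n\,v_A(D)$), so each of the $n$ parallel copies of a state circle meets each Jones--Wenzl box transversally, entering on one side and leaving on the other. The seed state therefore contains no turnback at any box, even when the crossing $c$ is a loop of $\GA$; being a loop of the state graph is a global property of a state circle, not a local cap sitting against an idempotent, and the box may lie far from $c$. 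Consequently your ``cancellation packet'' --- the seed together with $B$-switches near $c$ and Temperley--Lieb generators inserted at a box --- does not visibly reassemble into $f^{(n)}$ composed with a turnback, and you have not established that its members all contribute in the extreme degree (indeed the degree estimate you invoke in the adequate case says non-identity Wenzl terms contribute strictly away from it). A turnback-style argument can be made to work --- it is essentially the route of the follow-up work of Lee cited in the paper, which proves the stronger statement $d(n)\le h_n(D)-(n-1)$ --- but it requires an additional idea (inserting further idempotents along the cabled state circle through $c$ via a decomposition of the identity in the Temperley--Lieb algebra) that your sketch does not supply.

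For comparison, the paper's proof is entirely elementary and avoids idempotents. By the Chebyshev expansion $S_n(D)=D^n+(1-n)D^{n-2}+\cdots$, only the top cable $D^n$ can reach the extreme degree, so the relevant coefficient is $\pm a_{M}(D^n)$, the potential leading coefficient of $\langle D^n\rangle$. The spanning-subgraph expansion of the bracket over the ribbon graph $\GA^n$ shows that the subgraphs contributing at the top degree must have genus zero and all edges loops; for $n>1$ all loop edges at a vertex of $\GA^n$ lie on one side of that vertex, so these are exactly the subgraphs of the maximal all-loops subgraph $\mathbb{L}_n$, and their signed count is $\sum_{j}\binom{e(\mathbb{L}_n)}{j}(-1)^j=0$ as soon as $\mathbb{L}_n$ has an edge, i.e.\ as soon as $D$ fails to be $A$-adequate. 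I would encourage you either to adopt this combinatorial route or to fill in the skein-theoretic one by locating the turnback correctly.
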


Using Theorem \ref{zero1} and properties of semi-adequate diagrams we define 
a linear polynomial in $q$,  $T_K(q):= \alpha_K+ \beta_K q $, that is determined by $J_K(n+1, q)$,
and detects precisely 
when $K$ is $A$-adequate. More specifically we have the following:

\begin{corollary}\label{Adequate} There exists a link invariant $T_K(q)$, that is determined by the
colored Jones polynomial of $K$, such that
$T_K(q)\neq 0$ if and only $K$ is $A$-adequate. Furthermore,
if $T_K(q) = 1$, then $K$ is fibered.
\end{corollary}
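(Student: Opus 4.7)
The plan is to construct $T_K(q)=\alpha_K+\beta_K q$ from the two lowest-order coefficients of $q^{-d(n)}J_K(n+1,q)$, exploiting the stable-tail theorems of Armond~\cite{armond}, Dasbach--Lin~\cite{dasbach-lin:head-tail}, and Garoufalidis--Le~\cite{garoufalidisLe} as the construction principle. For each $n\geq 2$ I would write
\[ J_K(n+1,q)=q^{d(n)}\bigl(a_n+b_n\,q+\text{higher order in }q\bigr), \]
so that $a_n,b_n\in\ZZ$ are link invariants depending on $n$. For an $A$-adequate link these sequences are known to stabilize (up to a sign predicted by the Kauffman bracket at the all-$A$ state) to a pair $(\pm 1,\beta)$; I then declare $(\alpha_K,\beta_K)$ to be this stable pair when it exists and $(0,0)$ otherwise, and set $T_K(q):=\alpha_K+\beta_K q$. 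By construction $T_K$ depends only on the sequence $\{J_K(n+1,q)\}_n$.

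One direction of the biconditional is immediate: if $K$ is $A$-adequate, the stabilization theorems cited above give $\alpha_K=\pm 1$, so $T_K(q)\neq 0$. For the converse I would argue by contrapositive. If $K$ is not $A$-adequate, then by Theorem~\ref{zero1} applied to every diagram of $K$ we have $d(n)>h_n(D)$ for every diagram $D$ and every $n\geq 2$. Combining this strict inequality with the Kauffman bracket state-sum expansion that underlies the lower bound $d(n)\geq h_n(D)$, the lowest two coefficients of $J_K(n+1,q)$ must exhibit sufficient cancellation across $n$ to prevent $(a_n,b_n)$ from stabilizing to a nonzero pair, so $T_K(q)=0$. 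The main obstacle lies precisely here: sharpening the state-sum proof of Theorem~\ref{zero1} to control the two lowest-order coefficients simultaneously, rather than just the leading one.

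Finally, suppose $T_K(q)=1$, so that $K$ is $A$-adequate with $\alpha_K=1$ and $\beta_K=0$, and fix any $A$-adequate diagram $D$ of $K$. By Dasbach--Lin~\cite{dasbach-lin:head-tail} and Futer--Kalfagianni--Purcell~\cite{fkp:gutsjp}, the second stable tail coefficient is expressible in terms of the reduced $A$-state graph $\GRA$ of $D$, and its vanishing forces $\GRA$ to have no multiple edges. By the geometric results of \cite{fkp:gutsjp, fkp:qsf}, this combinatorial condition implies that the guts of $S^3\cut S_A(D)$ are trivial, so that the $A$-state surface $S_A(D)$ is a fiber of a fibration of $S^3\setminus K$; in particular, $K$ is fibered. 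This final step is a direct invocation of the cited machinery.
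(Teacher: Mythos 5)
There is a genuine gap in your converse direction, and it stems from where you choose to read off the coefficients. You define $a_n$ and $b_n$ as the coefficients sitting at the \emph{actual} extreme degree $d(n)$ of $J_K(n+1,q)$. But $a_n$ is then nonzero for every $n$ by the very definition of $d(n)$, so your fallback ``$(0,0)$ when the pair does not stabilize'' is the only possible escape route in the non-adequate case: you must prove that for a non-$A$-adequate link the sequence $(a_n,b_n)$ \emph{fails to stabilize}. Theorem \ref{zero1} gives you no purchase on this. It tells you that $d(n)$ misses the diagrammatic bound $h_n(D)$ for every diagram and every $n\geq 2$, but it says nothing about the coefficients located at $d(n)$ itself, and the paper explicitly notes that the extreme part of the colored Jones polynomial of non-semi-adequate links is not understood (the stabilization results of Armond, Dasbach--Lin and Garoufalidis--Le you cite are theorems \emph{about} $A$-adequate links, not criteria for detecting them). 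Your sentence about ``sufficient cancellation across $n$'' is precisely the unproved step, and nothing in the paper or its references supplies it.

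The paper sidesteps this by anchoring the coefficients at the \emph{predicted} degree rather than the true one: it fixes a diagram $D$ in the complexity-minimizing set ${\mathcal D}(K)$ and uses the Kauffman bracket coefficients $a_{M}(D^n)$ and $a_{M-1}(D^n)$ of the cables $D^n$ at the degree bound $M(D^n)$, setting $\alpha(D,n)=\abs{a_M(D)a_M(D^n)}$ and $\beta(D,n)=\abs{a_M(D)a_{M-1}(D^n)}$ (Definition \ref{defi:stable}). At that location the ribbon-graph computations of Lemma \ref{leading} and Lemma \ref{second} prove \emph{exact vanishing} for non-$A$-adequate diagrams and all $n>2$; the extra factor $\abs{a_M(D)}$ is inserted precisely to absorb the case $a_M(D)=0$ excluded from the hypotheses of Lemma \ref{second}, and Lemma 3.4 guarantees the construction is independent of the choice of $D\in{\mathcal D}(K)$ (Corollary \ref{invariant}). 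Your forward direction and your final fibering step are essentially sound --- the paper likewise concludes from $\beta_K=0$ via Corollary 9.16 of \cite{fkp:gutsjp}, though the relevant combinatorial condition is that the reduced graph $\GRA$ is a tree, not merely that it has no multiple edges --- but without a substitute for the vanishing lemmas at the level of $M(D^n)$, the biconditional does not close.
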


Similarly, one can obtain a linear  polynomial in $q$,  $H_K(q)$, that is determined by $J_K(n+1, q)$,
and detects precisely 
when $K$ is $B$-adequate. 

To simplify the exposition, throughout the paper we will only deal with $A$-adequate links.
In Section two we state the definitions and recall the background and results from  \cite{dasbach-futer...}
that we need in this paper. We also prove some technical lemmas needed for the proofs of the main results.
In Section three we prove the above results and discuss some corollaries and applications.

\section{Ribbon graphs and Jones polynomials}

A \textit{ribbon graph} is a
multi-graph (i.e. loops and multiple edges are allowed) equipped with a
cyclic order on the edges at every vertex. Isomorphisms between ribbon graphs are  isomorphisms that preserve the given cyclic order of the edges. A \textit{ribbon graph}  can be  embedded on
an orientable surface such that every region in the complement of the graph is a disk \cite{bo-ri}.
We call the regions the \emph{faces} of the ribbon graph.
For a ribbon graph $\G$ we define the following quantities:  
\begin{align*}
e(\G) &= \text{the number of edges of $\G$} \\ 
v(\G) &= \text{the number of vertices of $\G$} \\ 
f(\G) &= \text{the number of faces of $\G$}\\
k(\G) &= \text{the number of connected components of $\G$} \\ 
g(\G) &= \frac{2k(\G) - v(\G) + e(\G) - f(\G)}{2}, \text{the \textit{genus} of $\G$} 
\end{align*}

A Kauffman state $\sigma$ on a link diagram $D$ is a choice of $A$--resolution or
$B$--resolution at each crossing of $D$.  For each state $\sigma$  of a  link diagram a ribbon graph  is constructed as follows: The result of applying $\sigma$
to $D$
is a collection of non-intersecting
circles in the plane, together with embedded arcs that record the crossing
splice. See Figure \ref{splicing}.
We orient  these circles in the plane by orienting each component clockwise or anti-clockwise according
to whether the circle is inside an odd or even number of circles, respectively.
The vertices,  of the ribbon graph correspond to the collection of circles and the edges
 to the crossings. The orientation of the circles defines the
orientation of the edges around the vertices. Throughout the paper we will adapt the convention that the vertices
of ribbobn graphs are state circles rather than points. Furthermore we will often use the term   ribbon graph  for
 the {\emph state graph}; the underlying graph with the orientations of the state circles ignored.
It is with this understanding that 
 the examples shown in the third panel of Figure \ref{splicing} as well as the graphs of Figures 3 and 4 
are called ribbon graphs.

 We will denote the ribbon graph associated to state $\sigma$ by $\G_\sigma$. 
For more details we refer the reader to   \cite{dasbach-futer...}.
Of particular interest for us will be the ribbon graphs $\GA$ and $\GB$ coming from the states with
all-$A$ splicings and all-$B$ splicings.

\begin{figure}[height=0.5in, width=0.5in]
 \includegraphics{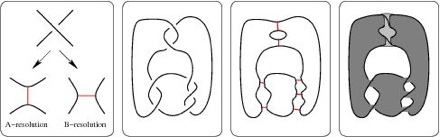}
	 %\centerline{\input{figures/splicing.pstex_t}}
\caption{From left to right: $A$-- and $B$--resolutions of a crossing, a link diagram, the ribbon graph $\GA$ and the surface $S_A$.
(Graphics by Jessica S. Purcell.)}
\label{splicing}
\end{figure}

\begin{define}
A \textit{spanning sub-graph} $\HH \subset \G$ of a ribbon graph  is obtained by removing edges from $\G$. 
\end{define}

With this setting we recall the following  \emph{spanning sub-graph expansion}  of the Kauffman bracket proven by
Dasbach, Futer, Kalfagianni, Lin and  Stoltzfus  \cite{dasbach-futer...}.

\begin{theorem} \cite{dasbach-futer...} \label{subgraphexpansion} Let $\GA$ be the ribbon graph obtained from the all $A$-resolution of a connected link diagram  $D(L)$. Then 
the Kauffman bracket of $D$ is given by
\begin{align*}
\langle D \rangle &= \sum_{\mathbb{H} \subset \GA} A^{e(\GA) - 2e(\mathbb{H})} (-A^2 - A^{-2})^{f(\mathbb{H}) - 1}, ƒ
\end{align*}
where $\mathbb{H} \subset \GA$ ranges over all spanning sub-graphs of $\GA$.
\end{theorem}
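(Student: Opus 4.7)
The plan is to translate the classical state-sum expansion of the Kauffman bracket into the claimed spanning sub-graph expansion via a natural bijection between Kauffman states of $D$ and spanning sub-graphs of $\GA$. First, I would start from the Kauffman state-sum formula
\[
\langle D\rangle \;=\; \sum_{\sigma} A^{a(\sigma)-b(\sigma)}\,(-A^2-A^{-2})^{|s_\sigma|-1},
\]
where $\sigma$ ranges over the $2^{c(D)}$ Kauffman states of $D$, with $a(\sigma)$ and $b(\sigma)$ counting the $A$- and $B$-resolved crossings and $|s_\sigma|$ counting the resulting state circles.

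Next, since the edges of $\GA$ correspond bijectively to the crossings of $D$, each spanning sub-graph $\HH\subset\GA$ determines a state $\sigma_{\HH}$ by taking the $B$-resolution at the crossings corresponding to edges of $\HH$ and the $A$-resolution at the remaining crossings; this gives a bijection between Kauffman states and spanning sub-graphs of $\GA$. Under this bijection $a(\sigma_{\HH})=e(\GA)-e(\HH)$ and $b(\sigma_{\HH})=e(\HH)$, so $a(\sigma_{\HH})-b(\sigma_{\HH})=e(\GA)-2\,e(\HH)$, which already matches the $A$-exponent appearing in the claim.

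The remaining, and main, step is the identity $|s_{\sigma_{\HH}}|=f(\HH)$, which I would prove by induction on $e(\HH)$. In the base case $\HH=\emptyset$, the associated state is the all-$A$ state whose state circles are the vertices of $\GA$, while the genus formula applied to a disjoint union of isolated vertices gives $f(\emptyset)=v(\GA)$. For the inductive step, adding one edge $e$ to $\HH$ corresponds exactly to flipping the resolution at the associated crossing from $A$ to $B$: a resolution flip changes $|s|$ by $\pm 1$ (either two distinct state circles merge into one or one circle splits into two), and appending an edge to a ribbon graph changes $f$ by $\pm 1$ (either two faces merge or one face splits), the sign in each case being governed by whether the two endpoints in question lie in the same face/circle or in distinct ones.

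The crucial, and delicate, point is that these two $\pm 1$ changes always agree. This is where the cyclic order at each vertex of $\GA$ does its work: it was defined precisely so as to record the cyclic order of the crossing arcs around each $A$-state circle. A careful local analysis at the flipped crossing, tracking how the two endpoints sit inside the current face(s) of the sub-graph, verifies that a splitting of state circles occurs exactly when a splitting of faces does, and likewise for merging; matching up this local combinatorics is the main obstacle. Once the identity $|s_{\sigma_{\HH}}|=f(\HH)$ is established, substituting the two identities into the Kauffman state-sum immediately yields the spanning sub-graph expansion as stated.
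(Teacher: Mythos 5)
The paper does not prove this theorem; it is quoted from the reference \cite{dasbach-futer...}, so the comparison here is with that source. Your route is essentially the standard one: the bijection between Kauffman states and spanning sub-graphs of $\GA$ (edges of $\HH$ marking the crossings switched to the $B$-resolution), the exponent bookkeeping $a(\sigma_{\HH})-b(\sigma_{\HH})=e(\GA)-2e(\HH)$, and the reduction of everything to the single identity $|s_{\sigma_{\HH}}|=f(\HH)$. That is exactly the skeleton of the original argument.

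The one place you should be careful is the inductive step for $|s_{\sigma_{\HH}}|=f(\HH)$. As stated, your inductive hypothesis is only a numerical equality, and a count alone cannot tell you whether the two attaching arcs of the newly added edge lie on the \emph{same} state circle exactly when they lie on the \emph{same} face of $\HH$; that is precisely the information needed to match the two $\pm 1$'s. To make the induction close you must strengthen the hypothesis to an explicit bijection between the faces of $\HH$ and the state circles of $\sigma_{\HH}$, compatible with where the crossing arcs sit, and carry that bijection through each step. The cleaner alternative, and the one used in \cite{dasbach-futer...}, is to observe directly that performing the $B$-resolution at a crossing is exactly the attachment of the corresponding edge-band to the vertex disks, so the state circles of $\sigma_{\HH}$ are isotopic to the boundary components of the ribbon surface of $\HH$; the identity $|s_{\sigma_{\HH}}|=f(\HH)$ then holds by definition of $f$, with no induction and no sign-matching needed. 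Either way the substitution into the Kauffman state sum finishes the proof, so your outline is sound once that step is firmed up.
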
 

\subsection{Semi-adequate links and ribbon graphs}

Lickorish and Thistlethwaite introduced the notion of $A$ and $B$-adequate links
and studied the properties of their link polynomials \cite{lick-thistle,lickorish:book}.  

\begin{define} \label{semiad}
A link diagram $D$ is called \emph{$A$--adequate} if  the ribbon graph $\GA$
corresponding to the all--$A$ state contains no 1--edge loops.  A link is called $A$-adequate if it admits
an $A$-adequate link diagram.

Similarly, $D$ is called $B$--adequate if the all--$B$ graph $\GB$ contains no 1--edge loops. 
 A link is called $B$-adequate if it admits
an $B$-adequate link diagram.

A link is called semi-adequate if it is $A$-adequate or $B$-adequate.
\end{define}

\medskip

\begin{define}\label{def:degree}For a connected link diagram $D$ and $n>1$ let $D^n$ denote  the diagram obtained from $D$ by taking $n$ parallel copies of $D$. 
Here the convention is that $D=D^1$.
See Figure \ref{example} for an example.
Define  

$$M(D):= e(\GA)+2v(\GA)-2, \ \  {\rm and} \ \  m(D):=-e(\GB)-2v(\GB)+2.$$
\end{define}

\begin{figure}[width=0.9in]
  \includegraphics{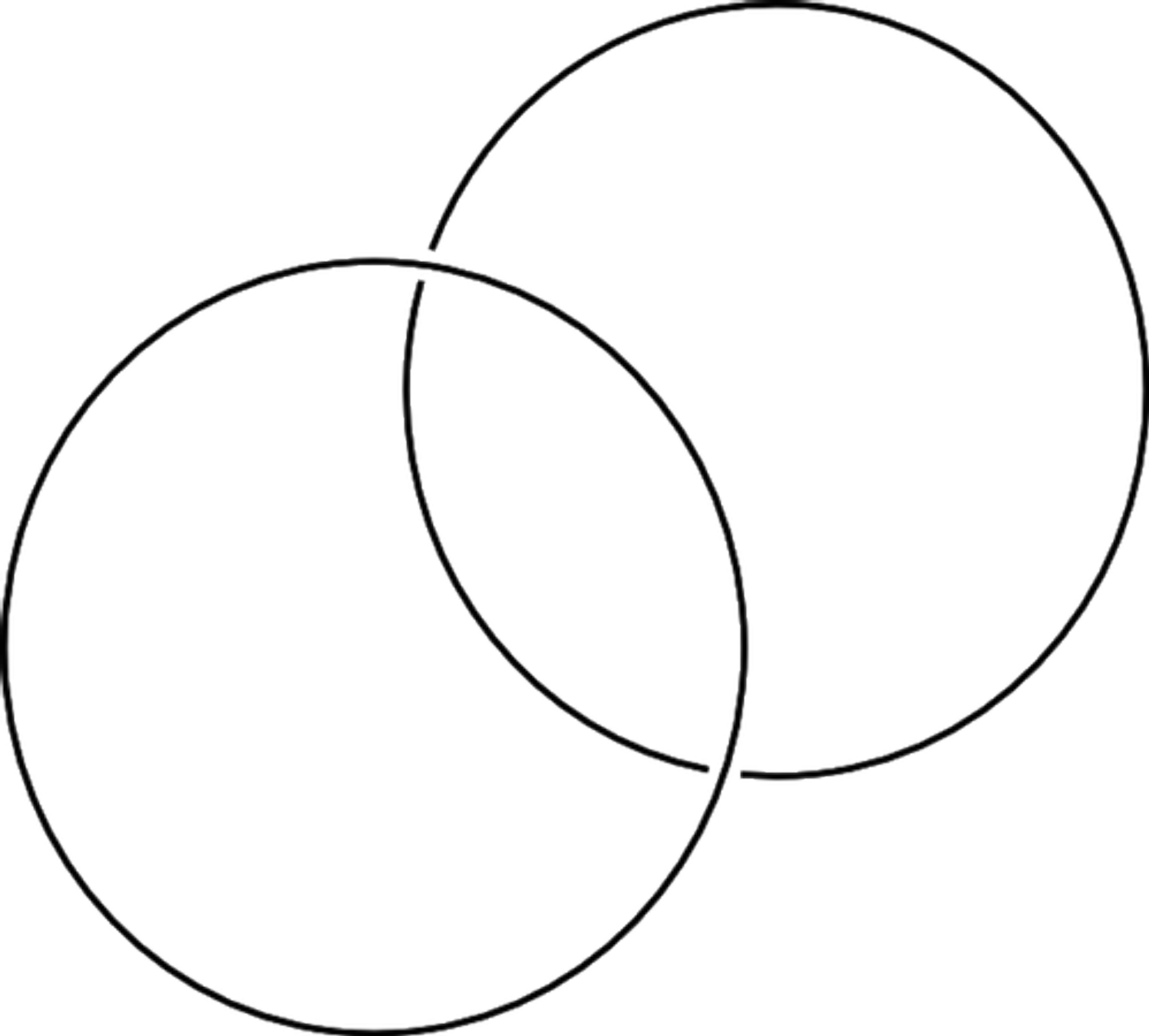}
  \hspace{.2in}
  \includegraphics{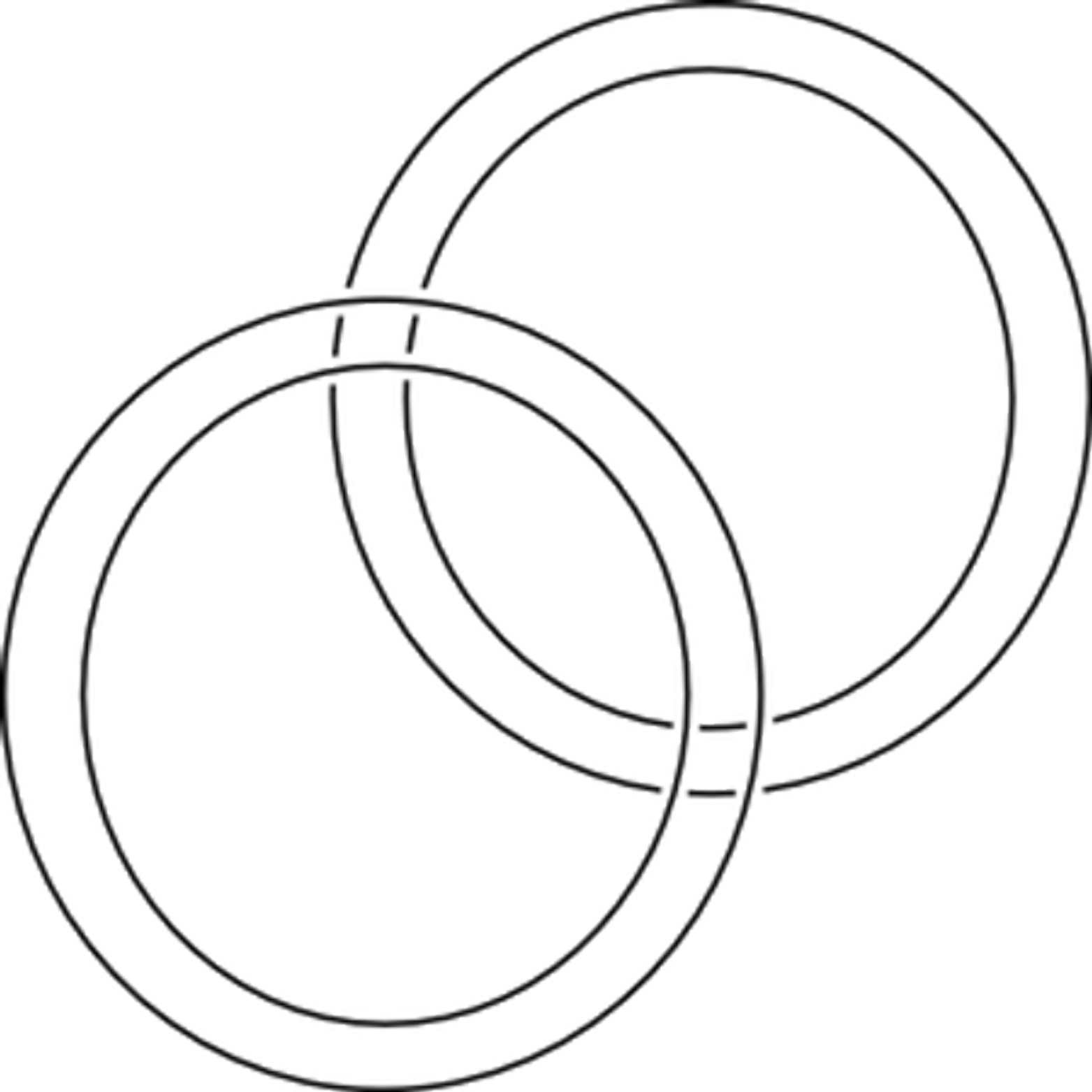}
  \caption{A link diagram and its  2-parallel}
  \label{example}
\end{figure}

With the notation of Definition \ref{def:degree}, it is known that, for every diagram $D$,  the highest degree (resp. the lowest degree ) of $\langle D \rangle$
is bounded above by $M(D)$ (resp. $m(D)$). Moreover, if
$D$ is $A$-adequate (resp. $B$-adequate) then this bound is sharp. See, for example,
\cite{lickorish:book}.

For $l\geq 0$, let $a_{M-l}{(D^n)}$ denote 
the  $l+1$-th coefficient $\langle D^n \rangle$, starting
from the maximum possible degree $M(D^n)$.
In this paper we are interested in the first two coefficients
$a_{M}{(D^n)}$ and  $a_{M-1}{(D^n)}$. For $A$-adequate links we have  $a_{M}{(D^n)}=\pm 1$,
for all $n>0$.
On the other hand, Manchon \cite{manchon}
shows that all integers can be realized as $a_{M}{(D)}$ for some link diagram.
Thus for $a_{M}{(D)}\neq 0,1, -1$,  Manchon's construction gives non $A$-adequate diagrams  for
which the upper bound on the degree of the Kauffman bracket  $\langle D \rangle$  is sharp.
In contrast to this, we show the following lemma which implies that the degree upper bound  of $\langle D^n \rangle$ is sharp, for some $n>1$,
if and only if $D$ is $A$-adequate.

\begin{lemma} \label{leading}
We have
that $a_{M}{(D^n)}\neq 0$, for some $n>1$, if and only if
 $D$ is  $A$-adequate. Equivalently, the diagram $D$ is not $A$-adequate
 if and only if $a_{M}{(D^n)}=0$, for all $n>1$.

\end{lemma}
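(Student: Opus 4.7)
The plan is to handle the two implications separately, reserving the most work for the harder ``only if'' direction.

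For the backward implication, suppose $D$ is $A$-adequate. A standard local inspection of the cable shows that $D^n$ is also $A$-adequate for every $n\geq 1$: the all-$A$ resolution of the bundle of $n^2$ crossings in $D^n$ arising from a single crossing $c$ of $D$ between distinct state circles $C_1\neq C_2$ produces only edges of $\GA(D^n)$ joining parallel copies $C_1^{(i)}$ and $C_2^{(j)}$, hence no new 1-edge loops. Since $A$-adequacy of $D$ rules out bundles coming from 1-edge loops of $\GA(D)$, the graph $\GA(D^n)$ also has no 1-edge loops, so the upper degree bound on $\langle D^n\rangle$ is realized with $a_{M}(D^n)=\pm 1\neq 0$.

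For the forward implication, assume $D$ is not $A$-adequate; then $\GA(D)$ has a 1-edge loop $e_0$ at some state circle $C$. Fix $n>1$; the goal is $a_{M}(D^n)=0$. By Theorem~\ref{subgraphexpansion} applied to $D^n$,
\[
a_{M}(D^n)\;=\;\sum_{\mathbb{H}}(-1)^{f(\mathbb{H})-1},
\]
the sum ranging over spanning sub-graphs $\mathbb{H}\subset\GA(D^n)$ satisfying the max-degree condition $f(\mathbb{H})-e(\mathbb{H})=v(\GA(D^n))$. I would construct a fixed-point-free sign-reversing involution on this set, forcing the sum to vanish.

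The key structural input is the local picture of $\GA(D^n)$ at the bundle of $n^2$ crossings coming from $e_0$. Writing $C^{(1)},\dots,C^{(n)}$ for the $n$ parallel copies of $C$ in the all-$A$ state of $D^n$, the bundle contributes $n$ loop edges $\ell_1,\dots,\ell_n$ (one at each $C^{(i)}$) together with $n(n-1)$ non-loop edges $e_{ij}$ linking distinct copies. This abundance of local edges, available as soon as $n\geq 2$, is exactly what distinguishes the cabled situation from the $n=1$ case covered by Manchon's counterexamples. The proposed involution is a \emph{joint} toggle within this bundle: for instance, simultaneously flipping the presence/absence of a loop $\ell_1$ and a paired non-loop edge $e_{12}$ in $\mathbb{H}$, chosen using the cyclic order coming from the ribbon structure at the bundle. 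The aim is to show that this joint toggle preserves the max-degree condition and reverses the parity of $f(\mathbb{H})$, pairing contributing $\mathbb{H}$'s with opposite signs.

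The main obstacle is verifying these two properties of the involution on every contributing $\mathbb{H}$. Toggling a single loop in a ribbon graph changes $f$ by $\pm 1$ in a direction depending on how the loop interleaves with the edges already present in $\mathbb{H}$, so neither property is automatic. Establishing the joint toggle will require a careful local analysis of the cyclic order of edges at $C^{(i)}$ in $\GA(D^n)$ and how it is inherited from the ribbon structure at $C$ in $\GA(D)$, combined with the specific planar layout of the all-$A$ state of the cable near $e_0$. This is where the technical lemmas prepared in Section~2 are expected to do the decisive work.
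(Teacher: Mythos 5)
Your backward implication is fine (and is the standard fact the paper also simply quotes), but the forward implication has a genuine gap, and the involution you propose is misdesigned. The decisive first step, which you skip, is to work out what the max-degree condition actually forces on a contributing subgraph. Writing a monomial of the contribution of $\HH$ as $A^{e(\GA^n)-2e(\HH)+2f(\HH)-2-4s}$ with $0\le s\le f(\HH)-1$, the condition that it lands in degree $M(D^n)$ gives $f(\HH)=v(\GA^n)+e(\HH)+2s$, hence $g(\HH)=k(\HH)-v(\GA^n)-s\ge 0$; since $k(\HH)\le v(\GA^n)$ and $s\ge 0$, this forces $s=g(\HH)=0$ and $k(\HH)=v(\GA^n)$. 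In particular every contributing $\HH$ has each component equal to a single vertex, i.e.\ \emph{all of its edges are loops}, and it has genus zero. Your proposed joint toggle of a loop $\ell_1$ together with a non-loop edge $e_{12}$ therefore cannot be an involution on the contributing set: inserting $e_{12}$ merges two components and destroys the condition $k(\HH)=v(\GA^n)$, so the image of a contributing subgraph is never contributing. No choice of ``pairing via the cyclic order'' can repair this.

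Once you know contributing subgraphs are loop-only and genus zero, the one fact you still need --- and which you correctly sense is the crux but leave unproved --- is that for $n>1$ \emph{every} loop-only spanning subgraph of $\GA^n$ automatically has genus zero, because in the cabled graph all loop edges at a given vertex are disjointly embedded on one side of that vertex (this is exactly what fails for $n=1$ and makes Manchon's examples possible; see the paper's Figure of $\GA$ versus $\GA^2$). Granting this, the contributing subgraphs are precisely the subsets of the set $\mathbb{L}_n$ of all loop edges of $\GA^n$, each contributing $(-1)^{f(\HH)-1}=(-1)^{v(\GA^n)+e(\HH)-1}$, and since $D$ is not $A$-adequate $\mathbb{L}_n$ is nonempty, so
\[
a_M(D^n)=(-1)^{v(\GA^n)-1}\sum_{j=0}^{e(\mathbb{L}_n)}\binom{e(\mathbb{L}_n)}{j}(-1)^j=(-1)^{v(\GA^n)-1}\bigl(1-1\bigr)^{e(\mathbb{L}_n)}=0.
\]
Equivalently, the correct sign-reversing involution is simply ``toggle one fixed loop edge'': the genus-zero statement guarantees this preserves the contributing set and flips the parity of $f(\HH)=v(\GA^n)+e(\HH)$. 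So your overall strategy (signed cancellation exploiting the cable structure) is the right one, but as written the proof both pairs the wrong edges and defers the only nontrivial verification.
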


\begin{proof}

It is known that if $D$ is $A$-adequate then $a_{M}{(D^n)}=\pm 1$; hence one direction of the lemma follows.
We will show that if $D$ is not $A$-adequate then $a_{M}{(D^n)}=0$, for all $n>1$.
For $n>1$ let $\GA^{n}$ denote the all-$A$ ribbon graph corresponding to $D^n$.
By Theorem \ref{subgraphexpansion}, the contribution of a spanning sub-graph $\mathbb{H} \subset \GA^n$ to $\langle D^n \rangle$ is given by 
\begin{equation}
X_{\HH}:=  A^{e(\GA^n) - 2e(\mathbb{H})} (-A^2 - A^{-2})^{f(\mathbb{H})-1}. \label{contribution} 
\end{equation}

A typical monomial of $X_{\HH}$ is of the form
$A^{e(\GA^n)-2e(\HH)+2f(\HH)-2-4s}$, for $0\leq s \leq f(\HH)-1$.
For a monomial to contribute to $a_{M}(D)$ we must have

$$e(\GA^n)-2e(\HH)+2f(\HH)-2-4s=e(\GA^n)+2v(\GA^n)-2,$$
or equivalently
$ f(\HH)=v(\GA^n)+e(\HH)+2s.$
Now we have
\begin{eqnarray*}
2g(\HH)&=&2k(\HH)-v(\GA^n) + e(\HH) - f(\HH)\\
&=& 2k(\HH)-2v(\GA^n) -2s,\\
\end{eqnarray*}
or  $g(\HH)=k(\HH)-v(\GA^n)-s\geq 0$.
But since $v(\GA^n)\geq k(\HH)$
(every component must have a vertex) and $s \geq 0$
we conclude that 
for a monomial of $X_{\HH}$ to contribute to $a_M(D)$ we must have
$s=g(\HH)=0$ and $v(\GA^n)= k(\HH)$.
Since  $f(\HH)=v(\GA^n)+e(\HH)$, the contribution of $\HH$ to $a_M(D)$ is
\begin{equation} (-1)^{f(\HH)-1}= (-1)^{v(\GA^n)+e(\HH)-1}. \label{topcontribution} \end{equation}

\begin{figure}[width=0.8in]
  \includegraphics{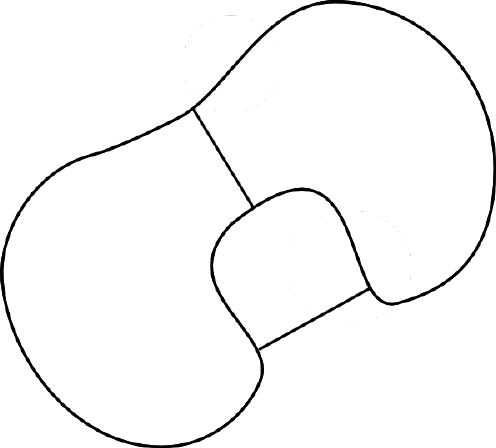}
  \hspace{.2in}
  \includegraphics{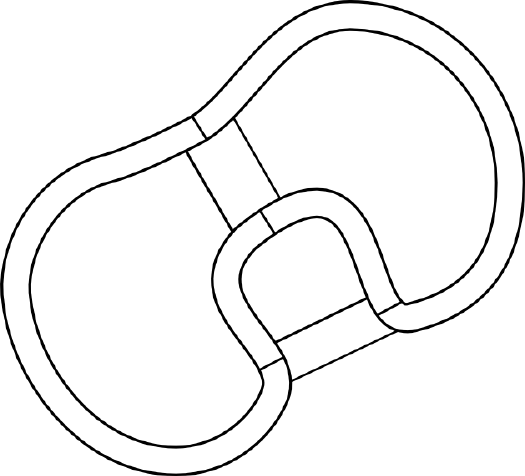}
  \caption{An example of ribbon graphs $\GA$ and $\GA^2$.  }
  \label{example1}
\end{figure}

Since $D$ is not $A$-adequate,
 $\GA$ must contain some loop edges. Thus $D^n$ also contains loop edges. Note that a sub-graphs of
 $\GA$ all of whose edges are loops may have positive genus. 
Nevertheless for $n>1$ every sub-graph of
 $\GA^n$ with only loop edges, must have genus zero as all the loop edges lie disjointly embedded
on the same side of the vertex they are attached to. See Figure  \ref{example1} for an example:
The graph $\GA$ in Figure 
\ref{example1} has genus one, since the two edges cannot be disjointly embedded on one side of the vertex. 
On the other hand in the graph $\GA^2$ all the loop edges are embedded on one side of some loop; thus every 
subgraph with only loop edges has genus zero.
Thus, if we let $\mathbb{L}_n  \subset \GA^n$ denote the  maximal spanning sub-graph whose edges are all the loops of  $\GA^n$
 then the  sub-graphs  of $\GA^n$ that contribute to $a_{M}{(D^n)}$ are in one to one correspondence with the sub-graphs of 
 $\mathbb{L}_n$.  Using equation  \ref{topcontribution}, It follows that, for $n>1$,
 
 \begin{equation}
a_{M}{(D^n)} = \sum_{\mathbb{H} \subset \mathbb{L}_n} (-1)^{e(\HH)+v(\GA^n)-1}=
(-1)^{v(\GA^n)-1}\left(\sum_{j=0}^{e(\mathbb{L}_n) }  {e(\mathbb{L}_n) \choose j} (-1)^j\right)=0.
\end{equation}
 
\end{proof}
Next we turn our attention to the second coefficient $a_{M-1}{(D^n)}$. We have the following. 

\begin{lemma} \label{second}  Suppose that  $a_{M}{(D)}\neq 0$ and that $D$ is not $A$-adequate. Then we have  $a_{M-1}{(D^n)}=0$, for all $n>2$.
\end{lemma}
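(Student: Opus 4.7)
The plan is to run the same kind of analysis as in Lemma~\ref{leading}, one level deeper in the degree spectrum of $\langle D^n\rangle$. Expanding via Theorem~\ref{subgraphexpansion}, I ask which $\HH \subset \GA^n$ contribute to the second-from-top slot, which lies at $A$-degree $M(D^n)-4$; the intermediate slot $A^{M(D^n)-2}$ is automatically empty because the matching condition produces an odd right-hand side for $2g(\HH)$. Matching the generic exponent in $X_\HH$ with $M(D^n)-4$ and combining with the ribbon identity $2g(\HH) = 2k(\HH) - v(\GA^n) + e(\HH) - f(\HH)$ together with $v(\GA^n) \geq k(\HH)$ yields the constraint
$$g(\HH) \;+\; s \;+\; \bigl(v(\GA^n) - k(\HH)\bigr) \;=\; 1,$$
where $s$ indexes the binomial expansion of $(-A^2 - A^{-2})^{f(\HH)-1}$. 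Since each summand is a non-negative integer, contributing $\HH$ fall into three families: (I) $g(\HH)=1$, $s=0$, and every edge of $\HH$ is a loop; (II) $g(\HH)=0$, $s=1$, and every edge of $\HH$ is a loop; (III) $g(\HH)=0$, $s=0$, and exactly one connected component of $\HH$ has two vertices, the others being isolated vertices carrying only loop edges.

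Families (I) and (II) can be disposed of for $n>1$ without using the hypothesis $a_M(D)\neq 0$. Family (I) is empty by the geometric observation already employed in Lemma~\ref{leading}: the parallel construction forces every all-loop subgraph of $\GA^n$ to have genus zero. Family (II) contributes a weighted sum of the shape $(-1)^{v(\GA^n)-1}\sum_{\HH \subset \mathbb{L}_n}(-1)^{e(\HH)}\bigl(v(\GA^n)+e(\HH)-1\bigr)$, using $f(\HH)=v(\GA^n)+e(\HH)$ for a genus-zero all-loop $\HH$; this separates into a pure binomial piece and a linearly-weighted piece, both of which vanish by the identities $\sum_e \binom{N}{e}(-1)^e=0$ and $\sum_e e\binom{N}{e}(-1)^e=0$ for $N\geq 2$, and $N=e(\mathbb{L}_n)\geq 2$ once $n\geq 2$ since $\GA$ has at least one loop.

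Family (III) is where the hypotheses $n>2$ and $a_M(D)\neq 0$ both enter. Each such $\HH$ decomposes as a connected planar ``bridge'' on two distinguished vertices $\{u,u'\}$ together with independent loop choices at every other vertex. Summing the local loops at any $w\notin\{u,u'\}$ that carries a loop contributes a vanishing factor $(1-1)^{\ell(w)}$, so only bridges whose complementary vertices are loop-free in $\GA^n$ can survive. The bridge, however, couples to the loops at $u$ and $u'$ through the ribbon face count $f(\HH)$, so the residual $(-1)^{e(\HH)}$ sum does not factor on the nose. The plan is to define a sign-reversing involution on surviving Family~(III) subgraphs by toggling a single parallel loop edge at $u$ or $u'$ whose presence leaves the bridge's planarity and face count unchanged; the hypothesis $n>2$ is exactly what ensures that such a ``free'' toggle is available (it can fail at $n=2$), while $a_M(D)\neq 0$ supplies the structural information, via Manchon's classification of non-$A$-adequate diagrams with nonvanishing top coefficient, needed to conclude that this involution is fixed-point-free and matches subgraphs of opposite sign. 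Carrying out this involution cleanly, while tracking how parallel loops interact with bridge planarity, is what I expect to be the main obstacle.
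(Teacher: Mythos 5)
Your reduction is the same as the paper's: matching degrees against $M(D^n)-4$ and using $2g(\HH)=2k(\HH)-v(\GA^n)+e(\HH)-f(\HH)$ gives exactly the constraint $g(\HH)+s+\bigl(v(\GA^n)-k(\HH)\bigr)=1$ and your three families (I), (II), (III) are the paper's types (1), (3), (2). Families (I) and (II) you handle correctly and essentially identically to the paper: (I) is empty because all-loop subgraphs of a cable have genus zero, and (II) cancels by the two binomial identities, which the paper packages as $e(\mathbb{L}_n)f(-1)+f'(-1)=0$ for $f(x)=(1+x)^{e(\mathbb{L}_n)}$. The gap is in Family (III), which is where the lemma actually lives and where both hypotheses are consumed. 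You correctly note that summing over the loop choices at vertices outside the bridge pair $\{u,u'\}$ produces a factor $(1-1)^{e(\mathbb{L}_{\HH})}$, so only bridges \emph{all} of whose complementary vertices are loop-free could survive; but you then hand the surviving set to an unexecuted sign-reversing involution toggling loops at $u$ or $u'$, which you yourself flag as the main obstacle. That involution is neither carried out nor clearly available: a given bridge pair may carry no loops at all, and adding a loop at $u$ that interleaves with two or more bridge edges can raise the genus and leave the type-(III) family, so the toggle need not be well-defined.

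The missing idea is that the surviving set is \emph{empty}, so no involution is needed. Since $D$ is not $A$-adequate and $a_M(D)\neq 0$, the computation of Lemma \ref{leading} forces $\GA$ to contain a vertex $v_1$ with loop edges on \emph{both} sides: otherwise every all-loop subgraph of $\GA$ has genus zero and the same binomial cancellation gives $a_M(D)=0$. In $\GA^n$ the vertex $v_1$ becomes $n$ nested state circles, the inside loops landing on the innermost circle $v_I$ and the outside loops on the outermost circle $v_O$; for $n>2$ these are distinct and non-adjacent, so no edge of $\GA^n$ joins $v_I$ to $v_O$ and no bridge pair $\{u,u'\}$ can equal $\{v_I,v_O\}$. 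Hence every bridge pair leaves at least one loop-carrying vertex in its complement, $e(\mathbb{L}_{\HH})\geq 1$, and the factor $(1-1)^{e(\mathbb{L}_{\HH})}$ annihilates all of Family (III). This is precisely how the hypotheses $a_M(D)\neq 0$ and $n>2$ enter; your appeal to a ``Manchon classification of non-$A$-adequate diagrams with nonvanishing top coefficient'' has no basis in the cited work, which only asserts that every integer is realized as a top coefficient. As written, your proposal does not close the one case that carries the content of the lemma.
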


\begin{proof}

For $n>2$ let $\GA^{n}$ denote the all-$A$ ribbon graph corresponding to $D^n$.

As in  \cite{DDD}, using  Theorem \ref{subgraphexpansion},  we see that   a spanning sub-graph $\mathbb{H} \subset \GA^n$ contributes to the term   $a_{M-1}{(D^n)}$
if and only if one of the following is true:
\begin{enumerate}

\item $v(\HH)=k(\HH)$ and $g(\HH)=1$.

\item  $v(\HH)=k(\HH)+1$ and $g(\HH)=0$.

\item $v(\HH)=k(\HH)$ and $g(\HH)=0$.

\end{enumerate}

If $v(\HH)=k(\HH)$ then all the edges of $\HH$ are loops.
Since $n>2$, as in the proof of Lemma \ref{leading}, 
subgraphs with only loop edges have genus 0.
Thus we 
cannot have any $\HH$ as in (1) above.
Before we turn our attention to types (2) or (3) we need the following:

\smallskip

{\it Claim.} Let   $\mathbb{H}\subset \GA^n$  be a spanning subgraph that
contains edges only on two vertices, say $v, v'$, and such that $v(\HH)=k(\HH)+1$. Then, there is a spanning sub-graph
 $\mathbb{L}_{\HH}\subset \GA^n$ all of whose edges are loops and none of the edges is attached to the vertices $v, v'$.
 See Figure \ref{example2} for an example of sub-graphs $\HH$ and $ \mathbb{L}_{\HH}$.
 
 \smallskip
 
 {\it  Proof of Claim.}  Since $D$ is not $A$-adequate,
 $\GA$ must contain some loop edges. Thus $D^n$ also contains loop edges.
 Since  $a_{M}{(D)}\neq 0$, $\GA$ contains a vertex, say $v_1$, that
  has loop edges attached to both sides; there are edges inside in $v_1$ and some loop edges
  outside of $v_1$, otherwise $a_M(D)$ would be equal to 0 (compare left picture of Figure \ref{example1}). In $\GA^n$,  $v_1$ will correspond to $n$ state circles (resp. vertices) and two of those state circles will have loops on them: one set of loops, originally coming from inside of the state circle corresponding to $v_1$ in $D$,  will be on the innermost state circle,
  while the loops coming from outside of $v_1$ will be on the outermost state circle  (compare right picture of Figure \ref{example1}). We will denote the two vertices corresponding to these state circles by $v_I$ and $v_O$, respectively. 
Since $n>2$,   in $ \GA^n$ we have at least one vertex $v\neq  v_I, v_O$ that also comes from cabling $v_1$. Moreover, there are no edges of $\GA^n$ with one end point on  $v_I$ and  the second on $v_O$.

  Let   $\mathbb{H} \subset \GA^n$  be a spanning subgraph that
contains  edges only between two vertices, say $v, v'$. By our discussion above, at least one of $v_I$ and $v_O$, say $v_I$,
is different from $v, v'$, and the claim follows.

%\smallskip

\begin{figure}[width=0.8in]
  \includegraphics{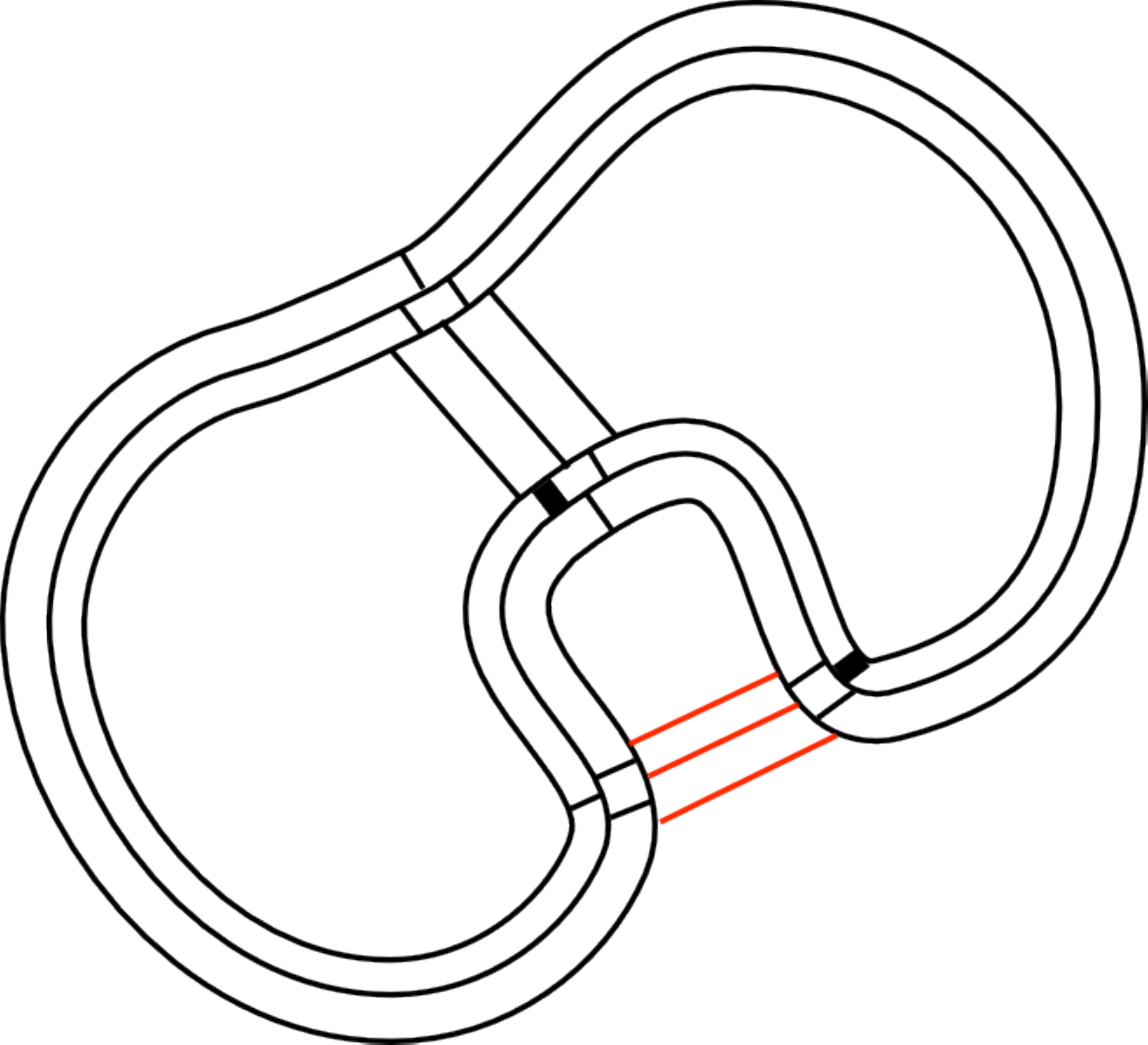}
  %\hspace{.2in}
  %\includegraphics{}
  \caption{An example  a ribbon graphs  $\GA^3$: The edges of  a spanning sub-graph $H$  with $v(\HH)=k(\HH)+1$ are drawn thicker.
  The edges of the corresponding sub-graph  $\mathbb{L}_{\HH}$ are shown in red.}
  \label{example2}
  \end{figure}

Now we show that the contribution of all sub-graphs satisfying (2) above to $a_{M-1}(D^n)$ vanishes:  The condition $v(\HH)=k(\HH)+1$ 
means that there are exactly two vertices in $\HH$ with edges in $\HH$ joining them.
Consider the set $\mathcal H$ of sub-graphs of $\GA^n$ with edges on exactly two vertices, where we must have edges joining
these two vertices, and possibly loops on one of the vertices. 
Given  $\HH\in \mathcal H$,    let $\mathbb{L}_{\HH}\subset \GA^n$ denote the maximal (i. e. the one with the most edges)
sub-graph obtained from the claim above. That is,  all the edges of  $\mathbb{L}_{\HH}$ are loops that are attached 
to vertices disjoint from the two vertices containing the edges of $\HH$.
Hence adding any subset of edges of $\mathbb{L}_{\HH}$ to $\HH$ also produces a subgraph of type (2) above. 
Moreover, all the sub-graphs of type (2) are obtained from an element of $\mathcal H$ in this fashion.
The contribution of all type (2)  subgraphs to $a_{M-1}{(D^n)}$ is 

$$ \sum_{\HH \in \mathcal H} \left( \sum_{j=0}^{e(\mathbb{L}_{\HH})} {e(\mathbb{L}_{\HH}) \choose j}   (-1)^{f(\HH)+j}\right) =
\sum_{\HH \in \mathcal H}(-1)^{f(\HH)} \left( (1+(-1))^{e(\mathbb{L}_{\HH})}\right)=0.$$

  We now show that
  the contribution of type (3) sub-graphs to $a_{M-1}{(D^n)}$  is also zero.  As in the proof of Lemma \ref{leading},  let
  $\mathbb{L}_n  \subset \GA^n$ denote the  maximal spanning sub-graph whose edges are all the loops of  $\GA^n$. The subgraphs of type (3) are in one to one correspondence
  with the sub-graphs of $\mathbb{L}_n$. The contribution of these graphs
  is
  
  \begin{equation}(-1)^{v(\GA^n)-1}  \sum_{\mathbb{H} \subset \GA}  e(\HH) (-1)^{e(\HH)}, \label{cont} \end{equation}
  where $\HH$ ranges over all sub-graphs of $\GA^n$. We have
  
  \begin{eqnarray*}
 \sum_{\mathbb{H} \subset \GA}  e(\HH) (-1)^{e(\HH)}&=&   \sum_{j=0}^{e(\mathbb{L}_n)} \left( e(\mathbb{L}_n)-j\right)  {e(\mathbb{L}_n) \choose j}   (-1)^{e(\mathbb{L}_n)-j}       \\
&=&   e(\mathbb{L}_n)  \sum_{j=0}^{e(\mathbb{L}_n)} {e(\mathbb{L}_n) \choose j}   (-1)^{e(\mathbb{L}_n)-j}-     \sum_{j=1}^{e(\mathbb{L}_n)} j  {e(\mathbb{L}_n) \choose j}   (-1)^{e(\mathbb{L}_n)-j} \\
&=&  (-1)^ {e(\mathbb{L}_n)} \left( e(\mathbb{L}_n)   \sum_{j=0}^{e(\mathbb{L}_n)} {e(\mathbb{L}_n) \choose j}   (-1)^{j}+   \sum_{j=1}^{e(\mathbb{L}_n)} j  {e(\mathbb{L}_n) \choose j}   (-1)^{j-1}\right) \\
&=& (-1)^ {e(\mathbb{L}_n)} \left(e(\mathbb{L}_n)  f(-1) + f'(-1)\right)=0,\\
\end{eqnarray*}
where  $f(x):=(1+x)^{{e(\mathbb{L}_n) }}$ and $f'(x)= e(\mathbb{L}_n) (1+x)^{{e(\mathbb{L}_n) -1}}$, the first derivative of $f(x)$.
Thus the  quantity in equation \ref{cont} is zero as desired.
  
  \end{proof}

\begin{remark} {\rm The assumption $a_{M}{(D)}\neq 0$ in the statement of Lemma \ref{second}
is not necessary. However the proof given above does not go through without the restriction that 
$a_{M}{(D)}\neq 0$ . In the general case the proof will follow from the arguments in \cite{lee}}.
\end{remark}

  \section{ Colored Jones polynomial relations}
  A good reference for the following discussion is Lickorish's book \cite{lickorish:book}.
   The colored Jones polynomials of a link $K$ have a convenient expression in terms of \emph{Chebyshev polynomials}. For $n \geq 0$, the polynomial $S_n(x)$ is defined recursively as follows:
\begin{equation}\label{eq:cheb-recursive}
S_{n+1}(x) = x S_n(x) - S_{n-1}(x), \qquad S_1(x) = x, \qquad S_0(x) = 1.
\end{equation}

Let $D$ be a diagram of a link $K$.  Recall that for  an integer $m > 0$,  $D^m$ denotes the diagram obtained from $D$ by taking $m$ parallel copies of $K$.
This is the $m$--cable of $D$ using the blackboard framing; if $m=1$ then $D^1=D$. 
Recall that $\langle D^m \rangle$ denotes the Kauffman bracket of $D^m$.
Let  $c_+=c_+(D)$ and $c_-=c_-(D)$  denote the number of positive and negative crossings of $D$ and let
$w=w(D) = c_+ - c_-$ denote the writhe of $D$. Then we may define the function
\begin{equation}\label{eq:unreduced}
G_D(n+1, A):= \left( (-1)^n A^{n^2+2n} \right)^{-w} (-1)^{n-1} \langle S_n( D) \rangle,
\end{equation}
where $S_n(D)$ is a linear combination of blackboard cabling of $D$, obtained via equation (\ref{eq:cheb-recursive})
and the bracket is extended linearly.  For the results below, the important corollary of the recursive formula for $S_n(x)$ is that
\begin{equation}\label{eq:chebyshev}
S_n(D) = D^n + (1-n)D^{n-2} + \mbox{lower degree cablings of } D.
\end{equation}

The reduced  $(n+1)$--colored Jones polynomial  of $K$, denoted by
$J_K(n+1, q)$, is obtained from $ \left(\frac{A^4 - A^{-4}}{A^{2n} - A^{-2n}} \right) G(n+1, A)$ by substituting $q:=A^{-4}$.
 The ordinary Jones polynomial corresponds to $n=1$.

 \subsection{ Bounds on the degree of colored Jones polynomials}
 Given a  link diagram $D(L)$, let $v_A(D)$ denote the number of components resulting from $D$ by applying the all-$A$ Kauffman state.
For $n\in \NN$ let
$$h_n(D):= 2{c_-(D)}n^2+ 2\left(v_A(D)-w(D)\right) n -2. $$ 

Let $d(n)$ denote the maximum degree in $A$ of $G_D(n+1, A)$ .
It is know that, for every $n>0$,  $d(n)\leq h_n(D)$ and that if $D$ is a $A$-adequate diagram then

\begin{equation}
d(n)=h_n(D)= 2{c_-(D)}n^2+ 2\left(v_A(D)-w(D)\right)n -2,\label{degree}
\end{equation}
and the coefficient of this leading terms is known to be $\pm 1$.
For $n=1$ the equation \ref{degree}  is not enough to characterize $A$-adequate diagrams:  Manchon \cite{manchon}
shows that all non-zero integers can be realized as leading coefficients of Jones polynomials of knots with diagrams satisfying equation \ref{degree}. 
Links realizing integers $a_{M}\neq -1, 1$ will necessarily be
non $A$-adequate. It follows that there exist  infinitely many links $L$  that admit diagrams $D$ with $d(1)=h_1(D)$ but $D$ or $L$ is non-adequate.
In contrast to this, in this paper we have:

\begin{theorem} \label{zero}Let $D$ be a diagram of a link $K$ and let  $h_n(D)$, $d(n)$ and $G_D(n+1, A)$ be as above. Then,
  $D$ is $A$-adequate if and only if we have $d(n)=h_n(D)$, for some $n\geq 2$.

Furthermore, we have $ d(n)=h_n(D)$, for every $n\geq 2$, if and only if $d(2)=h_2(D)$.
\end{theorem}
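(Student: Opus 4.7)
The plan is to reduce everything to Lemma \ref{leading} via the Chebyshev expansion (\ref{eq:chebyshev}). The forward direction of the first equivalence is in the literature: if $D$ is $A$-adequate, then $d(n) = h_n(D)$ for every $n \geq 1$. What remains is the contrapositive: if $D$ is not $A$-adequate, then $d(n) < h_n(D)$ for every $n \geq 2$.

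First I would rewrite $d(n)$ using the definition of $G_D(n+1,A)$, obtaining
$$d(n) = \deg \langle S_n(D) \rangle - w(n^2 + 2n),$$
together with the expansion
$$\langle S_n(D) \rangle = \langle D^n \rangle + (1-n) \langle D^{n-2} \rangle + \text{brackets of lower cablings}.$$
Next I would verify the routine identity $h_n(D) = M(D^n) - w(n^2 + 2n)$, which follows from $e(\GA^n) = n^2 e(\GA)$ and $v(\GA^n) = n\, v_A(D)$, giving $M(D^n) = c(D)\, n^2 + 2 v_A(D)\, n - 2$ and then using $c(D) - w = 2c_-$. After this, $d(n) = h_n(D)$ becomes equivalent to $\deg \langle S_n(D) \rangle = M(D^n)$.

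The crux is then the degree gap $M(D^n) - M(D^{n-2}) = 4(n-1)c(D) + 4 v_A(D) \geq 8$ for $n \geq 2$ and $D$ non-trivial, which guarantees that the cablings $D^{n-2}, D^{n-4}, \ldots$ contribute nothing to the coefficient of $A^{M(D^n)}$ in $\langle S_n(D) \rangle$. That coefficient is therefore exactly $a_M(D^n)$, and Lemma \ref{leading} says it vanishes for $n \geq 2$ whenever $D$ is not $A$-adequate. Hence $\deg \langle S_n(D) \rangle < M(D^n)$, which yields $d(n) < h_n(D)$ as required.

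The second assertion is now almost immediate. One direction is trivial, and for the converse the equality $d(2) = h_2(D)$ forces $A$-adequacy by the first assertion, at which point the standard sharpness gives $d(n) = h_n(D)$ for every $n \geq 2$. The one subtle point that could have gone wrong — lower-order cabling terms conspiring to keep the top degree of $\langle S_n(D) \rangle$ at $M(D^n)$ even after the leading coefficient of $\langle D^n \rangle$ vanishes — is ruled out cleanly by the degree gap above, so all the real content sits in Lemma \ref{leading}.
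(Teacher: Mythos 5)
Your proposal is correct and follows essentially the same route as the paper: reduce the statement to Lemma \ref{leading} via equations (\ref{eq:unreduced}) and (\ref{eq:chebyshev}), identifying the coefficient of $A^{h_n(D)}$ in $G_D(n+1,A)$ with $a_{M}(D^n)$ up to sign. The paper states this identification in one line, whereas you verify the supporting details (the identity $h_n(D)=M(D^n)-w(n^2+2n)$ and the degree gap ruling out interference from lower cablings), which is exactly the content the paper leaves implicit.
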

\begin{proof} By equations \ref{eq:unreduced} and \ref{eq:chebyshev}
the coefficient of $A^{h_n(D)}$ in $G_D(n+1, A)$ is the same, in absolute value,  as $a_{M(D^n)}$.
Thus, the theorem follows immediately by Lemma \ref{leading}.
\end{proof}

\begin{remark} {\rm Theorem \ref{zero} should be compared with the main results of \cite{thi:adequate} where Thistlethwaite
shows that part of the 2-variable Kauffman polynomial where the  total degree is the number of crossings in a link diagram,
is non-zero if and  only if the diagram is $A$-adequate.}
\end{remark}

\subsection{Stable invariants}
On the set of oriented link diagrams consider the complexity $$\left( c_-(D), \ c(D), \  v_A(D)- w(D)\right),$$
ordered lexicographically.
For a link $K$ define ${\mathcal D}(K)$ to be the set of diagrams representing $K$ and minimize this complexity.  
More specifically, we define $ {\mathcal D}(K)$  as follows: First consider the set of all diagrams of $K$ that minimize the number 
of negative crossings $c_{-}$; call this minimum number $c_{-}(K)$. Then, within this set restrict to the subset say $X_K$ of diagrams that have minimum crossing number: that is $D\in X_K$ if and only if $c(D)\leq c(D')$, for all diagrams of $K$ with $c_{-}(D)=c_{-}(K)$.
Since there are only finitely many diagrams of bounded crossing number, given a link $K$,  the set $ X_K$ is finite. 
Thus we may define

$$ {\mathcal D}(K):=\{ D\in X_K\ | \  v_A(D)-w(D)\leq v_A(D')-w(D'), {\rm for} \ {all} \ D' \in X_K \}.$$
\begin{lemma} Suppose that for a link $K$, there is $D\in {\mathcal D}(K)$ that is $A$-adequate. Then, all the diagrams in ${\mathcal D}(K)$ are $A$-adequate.

\end{lemma}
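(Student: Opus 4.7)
The plan is to exploit the fact that $\mathcal{D}(K)$ is defined by a sequence of minimizations in lexicographic order, so the three numerical quantities $c_-(D)$, $c(D)$, and $v_A(D)-w(D)$ are each constant as $D$ ranges over $\mathcal{D}(K)$. Once that is in hand, $h_n(D)$ becomes a link invariant on $\mathcal{D}(K)$, and the converse direction of Theorem \ref{zero} transfers $A$-adequacy from the given diagram to every other element of $\mathcal{D}(K)$.

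First I would unpack the definition of $\mathcal{D}(K)$. Any $D' \in \mathcal{D}(K)$ lies in $X_K$, so $c_-(D') = c_-(K) = c_-(D)$. Since $X_K$ consists of the diagrams of $K$ with $c_-$ equal to $c_-(K)$ that minimize the total crossing number, we also have $c(D') = c(D)$. Finally, the very definition of $\mathcal{D}(K)$ selects the elements of $X_K$ that minimize $v_A - w$, so $v_A(D')-w(D') = v_A(D)-w(D)$. Consequently, for every $n$,
\[
h_n(D') \;=\; 2c_-(D')n^2 + 2(v_A(D')-w(D'))n - 2 \;=\; h_n(D).
\]

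Next, recall that the function $G_D(n+1,A)$ is normalized by the writhe factor $((-1)^n A^{n^2+2n})^{-w}$ precisely so that its maximum $A$-degree $d(n)$ depends only on the link $K$, not on the chosen diagram. Since $D \in \mathcal{D}(K)$ is $A$-adequate, the bound in \eqref{degree} is sharp at $D$, that is $d(n) = h_n(D)$ for every $n \geq 1$. Combining this with the equality $h_n(D)=h_n(D')$ from the previous paragraph gives $d(n) = h_n(D')$ for any $D' \in \mathcal{D}(K)$ and any $n$. Picking $n = 2$ and applying the converse direction of Theorem \ref{zero} to the diagram $D'$ yields that $D'$ is $A$-adequate, completing the proof.

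I do not anticipate any serious obstacle in this argument: the entire content is the observation that the triple $(c_-,c,v_A-w)$ is constant on $\mathcal{D}(K)$, so that $h_n$ becomes diagram-independent there, together with an invocation of the already-proven Theorem \ref{zero}. The only point requiring a little care is to state explicitly that $d(n)$ is a link invariant (built into the writhe normalization defining $G_D$), since it is this fact that lets us compare the two sides $d(n) = h_n(D)$ and $d(n) \leq h_n(D')$ at a common value of $d(n)$.
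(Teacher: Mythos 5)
Your proposal is correct and follows essentially the same route as the paper: both arguments observe that $c_-$, and $v_A-w$ (hence $h_n$) are constant on ${\mathcal D}(K)$, use sharpness of the degree bound at the $A$-adequate diagram to get $d(n)=h_n(D')$, and then invoke Theorem \ref{zero} to transfer $A$-adequacy to $D'$. Your explicit remark that $d(n)$ is diagram-independent (via the writhe normalization of $G_D$) is a point the paper leaves implicit, but the substance is the same.
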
\label{setD}
\begin{proof}Suppose that $D\in {\mathcal D}(K)$  is $A$-adequate and let $D'$ be another diagram in $D\in {\mathcal D}(K)$.
Since $D,D'\in {\mathcal D}(K)$, we have
$ c_-(D)= c_-(D')$ and  $v_A(D)-w(D)=v_A(D')-w(D')$. Thus $ h_n(D)= h_n(D')$.
Since $D$ is $A$-adequate, we have $d(n)= h_n(D)= h_n(D')$, for all $n>0$.
Thus, by Theorem \ref{zero}, $D'$ is $A$-adequate. 
\end{proof}

\begin{define}\label{defi:stable}Given a link $K$ and a link  diagram  $D\in  {\mathcal D}(K)$ we define  $T_{(D, n)}(q)$ as follows:
For $n>2$, define
 \begin{equation} \alpha(D, n):=\abs{ a_{M}{(D)} a_{M}{(D^n)} }\ \ \ \ {\rm and} \ \ \ \
 \beta(D, n):=\abs{a_{M}{(D)}a_{M-1}(D^n)}. \label{definvariant} \end{equation}
Now let $T_{(D, n)}(q):=  \alpha(D, n) + \beta(D, n) q$.
\end{define}
 Now we will show that the quantities defined in equation  \ref{definvariant}  are in fact independent of $n$ and of the diagram  $D$. 
 
\begin{corollary} \label{invariant} The quantities, $\alpha(D, n)$  and $\beta(D, n)$ defined above are  independent of the diagram  $D\in  {\mathcal D}(K)$ and
of $n$. Thus they are invariants of $K$ denoted by $\alpha_K$ and $\beta_K$.

\end{corollary}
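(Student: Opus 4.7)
The argument splits according to whether some, and then by Lemma \ref{setD} every, diagram in $\mathcal{D}(K)$ is $A$-adequate. Suppose first that no $D \in \mathcal{D}(K)$ is $A$-adequate. Fixing such $D$, Lemma \ref{leading} gives $a_M(D^n) = 0$ for all $n > 1$, forcing $\alpha(D,n) = 0$. For $\beta(D,n)$, if $a_M(D) = 0$ the product vanishes trivially, while if $a_M(D) \neq 0$ then Lemma \ref{second} gives $a_{M-1}(D^n) = 0$ for every $n > 2$. In either sub-case $\beta(D,n) = 0$, so one sets $\alpha_K = 0 = \beta_K$, clearly independent of $D$ and $n$.

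Suppose instead that every $D \in \mathcal{D}(K)$ is $A$-adequate. Then $|a_M(D)| = |a_M(D^n)| = 1$ for every $n \geq 1$, so $\alpha(D,n) = 1$ and $\beta(D,n) = |a_{M-1}(D^n)|$. The plan is to match $|a_{M-1}(D^n)|$ with an invariant of $K$ extracted from $J_K(n+1,q)$. By the Chebyshev expansion (\ref{eq:chebyshev}) and the framing correction (\ref{eq:unreduced}), and since for $A$-adequate $D$ the maximum degree of $\langle D^{n-2} \rangle$ is strictly less than $M(D^n) - 1$, the top two coefficients of $G_D(n+1, A)$ equal, up to sign, $a_M(D^n)$ and $a_{M-1}(D^n)$, sitting at $A^{h_n(D)}$ and $A^{h_n(D)-1}$. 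Any two diagrams in $\mathcal{D}(K)$ share the same $c_-$ and $v_A - w$, hence the same $h_n$; and $G_D(n+1, A)$ itself is determined by the link invariant $J_K(n+1,q)$ via multiplication by the fixed factor $(A^{2n} - A^{-2n})/(A^4 - A^{-4})$. Thus the two top coefficients of $G_D$ depend only on $K$ and $n$, giving independence of the choice of $D \in \mathcal{D}(K)$. Independence of $n$ will then follow from the stability of the two leading coefficients of the colored Jones polynomial of $A$-adequate links, as in Dasbach--Lin \cite{dasbach-lin:head-tail}, Armond \cite{armond}, and Garoufalidis--Le \cite{garoufalidisLe}.

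The principal obstacle is this final stability step: one needs it to hold already for $n > 2$ rather than merely asymptotically, and to match the second-from-top coefficient of $G_D(n+1, A)$ with $a_{M-1}(D^n)$ cleanly after parity and framing bookkeeping. I would handle this either by invoking the sharpest available form of head-stability for $A$-adequate diagrams, or, failing that, by carrying out a direct analysis parallel to the proofs of Lemmas \ref{leading} and \ref{second}: enumerate the three types of spanning subgraphs of $\GA^n$ contributing to $a_{M-1}(D^n)$, and verify using the absence of $1$-edge loops in $\GA$ that their aggregate contribution depends only on local data of $\GA$ that is insensitive to $n > 2$.
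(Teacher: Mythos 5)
Your proposal follows essentially the same route as the paper: the same case split on adequacy, the same use of Lemma \ref{leading}/Theorem \ref{zero} and Lemma \ref{second} to force $\alpha(D,n)=\beta(D,n)=0$ in the non-adequate case, and in the adequate case the identification of $\beta(D,n)$ with the penultimate coefficient of $J_K(n+1,q)$ (a link invariant) followed by an appeal to stability. The ``principal obstacle'' you flag at the end is resolved in the paper simply by citing \cite{dasbach-lin:head-tail}, whose stabilization of the last few coefficients of $J_K(n+1,q)$ for $A$-adequate links already holds at the finite values of $n$ needed here, so no additional subgraph analysis is required.
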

\begin{proof} Suppose that $K$ is not $A$-adequate and let $D\in  {\mathcal D}(K)$. Then  by Theorem \ref{zero} we have
$a_{M}{(D^n)}=0$ for every $n>1$ and every  $D\in  {\mathcal D}(K)$.
Thus by equation \ref{definvariant}
we have $\alpha(D,n)=0$. If $a_M(D)=0$, then equation \ref{definvariant} implies $\beta(D,n)=0$.
Suppose $a_M(D)\neq 0$. Then
by Lemma \ref{setD}, $a_{M-1}{(D^n)}=0$, thus $\beta(D, n)=0$.
Thus, in this case, the definition  $\alpha(D, n)=\beta(D, n)=0$, for every $n>1$
and $D\in  {\mathcal D}(K)$.

Suppose now that $K$ is $A$-adequate. By Lemma \ref{setD}, for every $D\in  {\mathcal D}(K)$, $D$ is an $A$-adequate diagram.
Then, by  \cite{lickorish:book} we have   $ \alpha(D, n)=1$, for all $n>0$. Similarly 
$\beta(D, n)$ is the absolute value of the penultimate coefficient of $J_K(n+1, q)$ and thus an invariant of $K$.
By \cite{dasbach-lin:head-tail}, $\beta(D, n)$ is also independent of $n$.
\end{proof}

Define the linear polynomial in $q$,  $T_K(q):= \alpha_K+ \beta_K q $. This is an invariant of $K$ that 
 detects exactly when $K$ is $A$-adequate. More specifically we have the following:

\begin{corollary}\label{Adequate} We have $T_K(q)\neq 0$ if and only $K$ is $A$-adequate. Furthermore,
if $T_K(q) = 1$, then $K$ is fibered.
\end{corollary}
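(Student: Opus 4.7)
The plan is to extract the two assertions of Corollary \ref{Adequate} separately and deduce each from Corollary \ref{invariant} together with known structural results on $A$-adequate links.

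For the equivalence $T_K(q)\neq 0$ if and only if $K$ is $A$-adequate: Corollary \ref{invariant} shows that $\alpha_K=\beta_K=0$ whenever $K$ is not $A$-adequate, so in that case $T_K(q)=0$. Conversely, the proof of Corollary \ref{invariant} gives $\alpha_K=1$ whenever $K$ is $A$-adequate. Since $T_K(q)=\alpha_K+\beta_K q$, the constant term is then nonzero, so $T_K(q)\neq 0$.

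For the fibered implication, suppose $T_K(q)=1$, so that $\alpha_K=1$ and $\beta_K=0$. The first equality already implies, by the part just proved, that $K$ is $A$-adequate. Fix any $D\in \mathcal{D}(K)$; by Lemma \ref{setD}, $D$ is an $A$-adequate diagram, so $|a_M(D)|=1$ and Corollary \ref{invariant} (combined with \cite{dasbach-lin:head-tail}) identifies $\beta_K$ with the absolute value of the stable penultimate coefficient of $J_K(n+1,q)$. By the explicit formula for this stable coefficient of \cite{armond, dasbach-lin:head-tail, garoufalidisLe}, this absolute value equals the first Betti number $e(\GRA)-v(\GRA)+1$ of the reduced all-$A$ state graph $\GRA$ (which is connected since $D$ is connected). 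Hence $\beta_K=0$ forces $\GRA$ to be a tree.

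Finally, I would invoke the fiberedness criterion of Futer--Kalfagianni--Purcell \cite{fkp:gutsjp}: for an $A$-adequate diagram whose reduced all-$A$ state graph is a tree, the state surface $S_A$ is a fiber of $S^3\setminus K$; in particular $K$ is fibered. The main obstacle is pinning down the correct identification $\beta_K = e(\GRA)-v(\GRA)+1$ and verifying that the fiberedness criterion applies to some (hence any) diagram in $\mathcal{D}(K)$; once these two inputs are cited precisely, the corollary follows immediately from Corollary \ref{invariant}.
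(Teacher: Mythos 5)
Your proof is correct and follows essentially the same route as the paper: the equivalence $T_K(q)\neq 0 \Leftrightarrow K$ is $A$-adequate is read off from Corollary \ref{invariant} exactly as in the paper, and the fibering implication rests on the same Futer--Kalfagianni--Purcell result (the paper simply cites Corollary 9.16 of \cite{fkp:gutsjp} as a black box, which packages precisely the two inputs you spell out, namely the identification of $\beta_K$ with the first Betti number of the reduced all-$A$ graph and the tree criterion for $S_A$ being a fiber). Your version is just a more explicit unpacking of that citation, with no substantive difference.
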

\begin{proof}  As already mentioned in the proof of Corollary \ref{invariant},  if $K$ is not $A$-adequate, then $T_K(q)=0$.
On the other hand, if $K$ is $A$-adequate then we know that $\alpha_K=1$ and thus $T_K(q)\neq 0$.

Suppose now that $T_K(q)=1$. Then, in particular, $\beta_K=0$. By Corollary 9.16, of \cite{fkp:gutsjp}, $K$ has to be fibered.
\end{proof}

\subsection{ Stabilization properties of Jones polynomials}  The coefficients of the colored Jones polynomials of $A$-adequate links have stabilization properties that have been studied by several authors in the recent years \cite{dasbach-lin:head-tail, armond,  garoufalidisLe}.  Dasbach and Lin observed that the last three coefficients of $J_K(n+1, q)$ stabilize. Armond \cite{armond}
and Garoufalidis and Le \cite{ garoufalidisLe} generalized this phenomenon to show the following: For every $i>0$ the $i$-th to last
coefficient of  $J_K(n+1, q)$, stabilizes  for $i\geq n$. 
These coefficients can be put together to form the \emph{tail} of the colored Jones polynomial.  In the case of $A$-adequate links the  invariants $\beta_K, \alpha_K$ 
defined above, are the last couple of  stable coefficients.
 In fact, Garoufalidis and Le  have studied the ``higher order" stability properties of the colored Jones polynomials and they showed that the stable coefficients 
 of the  polynomials $J_K(n+1, q)$ give rise to infinitely many $q$-series with interesting number theoretic and physics connections.  
 On the other hand, the work of Futer, Kalfagianni  and Purcell \cite{fkp:filling, fkp:PAMS, fkp:gutsjp, fkp:qsf, fkp:survey}
showed that certain stable coefficients of  $J_K(n+1, q)$ encode information about incompressible surfaces in knot complements and their
geometric types and have direct connections to hyperbolic geometry. See also discussion below.
Rozansky \cite{rozansky} showed that the stability behavior also appears in the categorifications of the colored Jones polynomials \cite{rozansky}.

The structure of the colored Jones polynomials of non-semi-adequate links and its geometric content are much less understood. 
In a forthcoming paper \cite{lee},  C. Lee generalizes Theorem \ref{zero} to show the following:  If $D$ is not $A$-adequate, then, we have  $d(n)\leq h_n(D)-(n-1)$, for every $n\geq 2$. This implies that the first $n-1$ coefficients 
of  $G_D(n+1, A)$, starting from the one for degree $h_n(D)$, are zero, for every $n\geq 2$.
Given a link $K$ and a link  diagram  $D\in  {\mathcal D}(K)$ one can define a power series $J^0=J^0(D)$ as follows:
Define $\beta_1=\beta_1(D)$ to be the coefficient of $A^{h_2(D)}$ in $G_D(3, A)$. For $i>1$, define
$\beta_i=\beta_i(D)$ to be the  coefficient of $A^{h_{i+1}(D)-4(i-1)}$.
Now let
$$J^0_K(q)=\sum_{i=1}^{\infty} \beta_i q^{i-1}.$$
Lee shows that $J^0_K(q)\neq 0$,
 if and only if $K$ is $A$-adequate.
This shows that the coefficients of  $J_K(n+1, q)$, at the level where the tail of semi-adequate links occurs,  also stabilize
but the corresponding tail is trivial. This was conjectured by Rozansky  
in  \cite{rozansky} where he also conjectures that this behavior should persist in the setting of categorification (Conjecture 2.13 of \cite{rozansky}).

\subsection{Detecting incompressible surfaces and their geometric types} For every
$D\in  {\mathcal D}(K)$ 
we obtain a surface $S_A$, as follows. Each state circle of $v_A(D)$ bounds a disk in $S^3$. This collection of disks can be disjointly embedded in the ball below the projection plane. At each crossing of $D$, we connect the pair of neighboring disks by a half-twisted band to construct a surface $S_A \subset S^3$ whose boundary is $K$. 
See Figure \ref{splicing} for an example. By the work of the first author with Futer and Purcell  \cite{fkp:gutsjp, fkp:qsf}, the invariant  $T_K(q)$ detects the geometric types
of the surface $S_A(D)$ and contains a lot of information about the geometric structures of 
of the complements $S^3\cut S_A(D)$ and $S^3\setminus K$. For example, combining Corollary \ref{Adequate} with results of 
 \cite{fkp:gutsjp, fkp:qsf} we have the following; for terminology and more details the reader is referred to the original references.

\begin{corollary} \label{properties} The invariant $T_K(q)$ has the following properties:
\begin{enumerate}
\item For every   $D\in  {\mathcal D}(K)$, the surface $ S_A(D)$ is essential (i.e. $\pi_1$-injective) in $S^3\setminus K$ if and only if
$T_K(q)\neq 0$.
\item For every   $D\in  {\mathcal D}(K)$, the surface $ S_A(D)$ is a fiber in the complement $S^\setminus K$ 
 if and only if
$T_K(q)=1$.

\item Suppose that $K$ is hyperbolic. Then, for every   $D\in  {\mathcal D}(K)$, the surface $ S_A(D)$ is quasifuschian $S^3\setminus K$ if and only if
$T_K(q)\neq 0, 1$.

\end{enumerate}
\end{corollary}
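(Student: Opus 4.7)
The plan is to translate each of the three geometric assertions into an equivalent statement about $A$-adequacy and the vanishing of the stable coefficients $\alpha_K,\beta_K$, and then invoke the corresponding results from \cite{fkp:gutsjp, fkp:qsf}. By Corollary \ref{Adequate} together with Lemma \ref{setD}, the condition $T_K(q)\neq 0$ is equivalent to saying that every diagram $D\in\mathcal{D}(K)$ is $A$-adequate, while $T_K(q)=1$ encodes both $A$-adequacy ($\alpha_K=1$) and the vanishing of the penultimate stable coefficient ($\beta_K=0$).

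For part (1), I would argue as follows. If $D\in\mathcal{D}(K)$ is $A$-adequate, then by Ozawa's theorem (as formulated in Theorem 3.1/Theorem 5.1 of \cite{fkp:gutsjp}) the state surface $S_A(D)$ is $\pi_1$--injective in $S^3\setminus K$. Conversely, if $D\in\mathcal{D}(K)$ is not $A$-adequate, then the all-$A$ ribbon graph carries a $1$--edge loop, and in this situation $S_A(D)$ contains an essential compressing or boundary-compressing disk, so it fails to be essential. Combining this equivalence with Corollary \ref{Adequate} gives (1).

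For part (2), if $T_K(q)=1$ then $\alpha_K=1$ and $\beta_K=0$; by (1) the surface $S_A(D)$ is essential, and Corollary~9.16 of \cite{fkp:gutsjp} identifies the vanishing of $\beta_K$ with the triviality of the guts of the decomposition $S^3\cut S_A(D)$, which means all of $S^3\cut S_A(D)$ consists of $I$--bundle pieces; this is exactly the condition that $S_A(D)$ is a fiber in $S^3\setminus K$. Conversely, if $S_A(D)$ is a fiber, then $K$ is $A$-adequate and the guts are empty, forcing $\beta_K=0$ and $\alpha_K=1$, hence $T_K(q)=1$.

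For part (3), assume $K$ is hyperbolic. An essential surface in the complement of a hyperbolic link that is not a virtual fiber must be quasifuchsian; conversely a quasifuchsian surface is essential and non-fibered. So the combination of parts (1) and (2) immediately gives (3): $S_A(D)$ is quasifuchsian iff it is essential and not a fiber, iff $T_K(q)\neq 0$ and $T_K(q)\neq 1$. The main obstacle I anticipate is verifying the precise dictionary $\beta_K \leftrightarrow \chi(\guts(S^3\cut S_A))$ used to invoke Corollary~9.16 of \cite{fkp:gutsjp}; once this identification is in place, the three statements follow by routine bookkeeping from the cited theorems.
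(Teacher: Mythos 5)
Your overall strategy is the same as the paper's: translate each condition on $T_K(q)$ into ``$A$-adequacy of the diagrams in ${\mathcal D}(K)$'' plus (non)vanishing of $\beta_K$, and then quote the geometric results of \cite{fkp:gutsjp, fkp:qsf}. Parts (1) and (2) track the paper's argument, which cites Theorem 3.19 of \cite{fkp:gutsjp} for ``$S_A(D)$ is essential if and only if $D$ is $A$-adequate'' and the fibering criterion for $\beta_K=0$ from the same reference. One caution in (2): the step ``guts trivial $\Rightarrow$ all of $S^3\cut S_A$ is $I$-bundle pieces $\Rightarrow$ fiber'' conflates two different conditions. Triviality of the guts only says the complement decomposes as a book of $I$-bundles (a fibroid-type condition); being a fiber requires $S^3\cut S_A$ to be a product $I$-bundle over $S_A$, which in \cite{fkp:gutsjp} is characterized by the reduced all-$A$ graph being a tree, and it is that statement which is proved equivalent to $\beta_K=0$. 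You flag this dictionary as the point needing verification, and indeed the correct dictionary is not $\beta_K \leftrightarrow \chi(\guts)$ for the fibering claim.

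The genuine gap is in part (3). You assert that an essential surface in a hyperbolic link complement that is not a (virtual) fiber must be quasifuchsian. That dichotomy fails for surfaces with boundary in cusped manifolds: an essential, geometrically finite surface can contain accidental parabolics, in which case it is neither quasifuchsian nor a fiber. Since $S_A(D)$ is a spanning surface with boundary on $K$, this possibility must be excluded, and doing so is precisely the content of Theorem 1.4 of \cite{fkp:qsf}, which the paper invokes at this point: for an $A$-adequate diagram, $S_A(D)$ is quasifuchsian if and only if $\beta_K\neq 0$ (so in particular these state surfaces never carry accidental parabolics). Without that theorem, the implication ``essential and not a fiber $\Rightarrow$ quasifuchsian'' does not follow from general principles, so your part (3) as written is incomplete.
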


\begin{proof} By Theorem  3.19 of \cite{fkp:gutsjp}  $S_A(D)$  is essential precisely when $D$ is $A$-adequate.
Thus part (1) follows from Corollary \ref{Adequate}.
For part (2), first note that  if $T_K(q)=1$ then, by Theorem \ref{zero}, $K$ is $A$-adequate.
Thus, $\beta_K$ is, in absolute value,  the penultimate  stable  coefficient of the colored Jones polynomial in the sense
of  \cite{dasbach-lin:head-tail}.
Now  by \cite{fkp:gutsjp}, $\beta_K=0$ if and only if  $ S_A(D)$ is a fiber in the complement $S^\setminus K$ .
Finally for (3) we note  that $T_K(q)\neq 0$, then again by Theorem \ref{zero}, if and only if  $K$ is $A$-adequate.
Now  $T_K(q)\neq 1$,
if and only $\beta_K\neq 0$.
Then, by Theorem 1.4 of \cite{fkp:qsf}  $\beta_K\neq 0$ if and only if the surface $ S_A(D)$  is quasifuchsian 
for every $A$-adequate diagram $D$.
\end{proof}

\smallskip

{\bf Acknowledgement.}  This work started while the authors were attending the conference 
Quantum Topology and Hyperbolic Geometry in Nha Trang, Vietnam (May 13-17, 2013).
We thank the organizers, Anna Beliakova,  Stavros Garoufalidis, Phung Hai, 
Vu  Khoi,
Thang Le, Chu  Loc,  and Phan Phien
for their hospitality and for providing excellent working conditions. We also thank Lev Rozansky for a useful conversation during the same conference.

\bibliographystyle{hamsplain} \bibliography{biblio}
\end{document}